\numberwithin{equation}{section}
\def\1#1{\overline{#1}}
\def\2#1{\widetilde{#1}}
\def\3#1{\widehat{#1}}
\def\4#1{\mathbb{#1}}
\def\5#1{\frak{#1}}
\def\6#1{{\mathcal{#1}}}
\newtheorem{theorem}{Theorem}[section]
\newtheorem{proposition}[theorem]{Proposition}
\newtheorem{corollary}[theorem]{Corollary}
\theoremstyle{definition}
\theoremstyle{remark}
\newtheorem{remark}[theorem]{Remark}
\numberwithin{equation}{section}
\title{On left inverses in the Lempert theorem}
\author{W\l odzimierz Zwonek}
\address{Department of Mathematics, Faculty of Mathematics and Computer Science, Jagiellonian University, \L ojasiewicza 6, 30-348 Krak\'ow, Poland}
\email{wlodzimierz.zwonek@uj.edu.pl}
\thanks{Supported by the NCN (National Science Centre, Poland) grant no. 2023/51/B/ST1/01312}
\date{November 2025}
\begin{document}

\begin{abstract} In the paper we discuss the problem of existence, uniqueness and extension through the boundary of left inverses to complex geodesics in Lempert domains. We concentrate on special left inverses (so called Lempert left inverses) characterized by the fact that their fibers are intersections of affine hyperplanes with the domain.\end{abstract}

\maketitle

\section{Introduction} 
For a domain $D\subset\mathbb C^n$ we define {\it the Carath\'eodory pseudodistance}
\begin{equation}
    c_D(w,z):=\sup\left\{p(F(w),F(z)):D\in\mathcal O(D,\mathbb D)\right\}
\end{equation}
and {\it the Lempert function}
\begin{equation}
l_D(w,z):=\inf\left\{p(\lambda_1,\lambda_2):\exists f\in\mathcal O(\mathbb D,D), f(\lambda_1)=w,f(\lambda_2)=z\right\},\; w,z\in D,
\end{equation}
where $p$ is the Poincar\'e distance on the unit disc. We also put $c_D^*:=\tanh c_D$ and $l_D^*:=\tanh l_D$. A fundamental Lempert theorem states the following (\cite{Lem 1981}, \cite{Lem 1982}, \cite{Lem 1984}).

\begin{theorem} Let $D$ be a strongly linearly convex domain in $\mathbb C^n$, $n\geq 2$ or a convex domain in $\mathbb C^n$, $n\geq 1$. Then $c_D\equiv l_D$.
\end{theorem}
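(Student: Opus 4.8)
The plan is to prove only the nontrivial inequality, since the other direction is automatic. For any domain, every $F\in\mathcal O(D,\mathbb D)$ is distance-decreasing for the Poincar\'e metric, so $c_D\le l_D$ holds with no hypotheses; the content of the theorem is the reverse inequality $l_D\le c_D$. I would first reduce to the model case of a bounded, smoothly bounded, strongly convex domain. A convex domain can be exhausted from inside by smooth strongly convex domains $D_j\nearrow D$, and both invariants are stable under such exhaustions ($c_{D_j}\searrow c_D$ by monotonicity of the competing functions, $l_{D_j}\searrow l_D$ by a normal-families argument), so it suffices to treat the smooth strongly convex case; the strongly linearly convex case is handled by the same boundary analysis, since strong linear convexity is precisely the condition that makes the relevant dual disc well defined.

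The heart of the argument is the production, for fixed $w,z\in D$, of an extremal disc through both points together with its holomorphic left inverse. I would construct a \emph{complex geodesic}: an analytic disc $f\in\mathcal O(\mathbb D,D)$, smooth up to $\partial\mathbb D$ with $f(\partial\mathbb D)\subset\partial D$ and $w,z\in f(\mathbb D)$. Such discs arise as solutions of a nonlinear Riemann--Hilbert boundary value problem: one seeks $f$ attached to $\partial D$ together with a nonvanishing dual map $\tilde f\in\mathcal O(\mathbb D,\mathbb C^n)$, smooth up to the boundary, such that along $\partial\mathbb D$ the vector $\bar\zeta\,\tilde f(\zeta)$ is a positive multiple of the complex gradient of a defining function of $D$ at $f(\zeta)$. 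Strong convexity makes the linearized operator an elliptic Fredholm operator with nonnegative partial indices, hence surjective, so the implicit function theorem and a continuity method in the H\"older spaces $\mathcal C^{k+\alpha}$ yield existence, uniqueness up to a M\"obius reparametrization, and full boundary regularity of these stationary discs.

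Given the stationary disc $f$ and its dual $\tilde f$, I would build a holomorphic left inverse $F\colon D\to\mathbb D$ with $F\circ f=\mathrm{id}_{\mathbb D}$, whose fibers are the intersections of $D$ with the affine complex hyperplanes $\{z:\langle z-f(\lambda),\tilde f(\lambda)\rangle=0\}$ --- the Lempert left inverses that are the subject of this paper. Writing $w=f(\lambda_1)$ and $z=f(\lambda_2)$, the disc $f$ is a competitor in the definition of $l_D$, so $l_D(w,z)\le p(\lambda_1,\lambda_2)$, while the left inverse $F$ gives $c_D(w,z)\ge p\big(F(w),F(z)\big)=p(\lambda_1,\lambda_2)$. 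Combined with the universal inequality $c_D\le l_D$, this produces the chain $l_D(w,z)\le p(\lambda_1,\lambda_2)\le c_D(w,z)\le l_D(w,z)$, forcing equality throughout (and incidentally showing $f$ is a genuine geodesic). Since $w,z$ were arbitrary, $c_D\equiv l_D$.

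The main obstacle is the analytic core of the second step: the solvability and boundary regularity of the Riemann--Hilbert problem for stationary discs, and the construction of the left inverse with the prescribed hyperplane fibers. This is exactly where strong convexity is indispensable, as it guarantees nonnegativity of the partial indices and the existence of the dual disc $\tilde f$. By comparison, the reduction in the first step --- removing smoothness and passing from strong convexity to general convexity by exhaustion --- is technically delicate but conceptually routine.
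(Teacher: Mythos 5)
The paper offers no proof of this statement at all: it is quoted as Lempert's foundational theorem with references to Lempert's original papers and to the Royden--Wong method as presented in Jarnicki--Pflug, so the only available comparison is with those sources. Your outline --- the trivial inequality $c_D\le l_D$, reduction by exhaustion to smooth bounded strongly convex domains, existence and boundary regularity of stationary discs via the nonlinear Riemann--Hilbert problem with a dual map, and the Lempert left inverse with affine-hyperplane fibers closing the chain $l_D(w,z)\le p(\lambda_1,\lambda_2)\le c_D(w,z)\le l_D(w,z)$ --- is exactly that standard approach, correctly assembled at the level of a sketch (with the analytic core, the solvability and regularity of the Riemann--Hilbert problem, named rather than carried out, as you yourself acknowledge).
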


Any taut domain $D\subset\mathbb C^n$ for which the Lempert theorem holds is called a {\it Lempert domain}. The Lempert domain may be characterized in the following way. For any pair of different points $w,z\in D$ there is a holomorphic mapping $f:\mathbb D\to D$ passing through $w$ and $z$, and a holomorphic function $G:D\to\mathbb D$ such that $G(f(\lambda))=\lambda$, $\lambda\in\mathbb D$. In this case $f$ is called {\it the complex geodesic} and $G$ {\it its left inverse}. Sometimes the left inverses are understood up to the post-composition with an automorphism of $\mathbb D$, i. e. $(a\circ G)(f(\lambda))=\lambda$, $\lambda\in\mathbb D$, for some holomorphic automorphism $a$ of $\mathbb D$.
As a good reference for the theory around the Lempert theorem, to some extent also as reference on properties of holomorphically invariant functions that we use in the sequel, we recommend the monograph \cite{Jar-Pfl 2013}.

Denote the class of Lempert domains by $\mathcal L$. The Lempert Theorem states that all bounded convex and strongly linearly convex domains (as well as their biholomorphic images) belong to $\mathcal L$. Note that the equality of invariant functions as in the Lempert Theorem remains valid under taking the union of an increasing sequence of domains. Therefore, assuming the increasing sequence of Lempert domains is taut it will also be the Lempert domain.

In his proof Lempert showed more; namely, the left inverses he constructed were Lempert left inverses.
We call the left inverse $G$ {\it Lempert } (sometimes in the literature they are called {\it good}) if $G^{-1}(\lambda)$ is the intersection of an affine hyperplane with $D$. Note that the Lempert left inverse $G$ on $D$ is determined by the value of its derivative $G^\prime(f(\lambda))$: $G^{-1}(\lambda)=(f(\lambda)+\operatorname{ker}G^{\prime}(f(\lambda)))\cap D$, $\lambda\in\mathbb D$.

Denote by $\mathcal L_0$ a subclass of $\mathcal L$ of domains for which all complex geodesics have Lempert inverses. Therefore, assuming the union of increasing sequence of elements of $\mathcal L_0$ is taut it will also be an element of $\mathcal L_0$. As already mentioned all bounded convex and strongly linearly convex domains belong to the class $\mathcal L_0$.

In his proof Lempert showed the existence of duals that are closely related with left inverses. More precisely, let $D$ be a domain where the Lempert method works and let $f:\mathbb D\to D$ be a complex geodesic. 

The  holomorphic mapping $h:\mathbb D\mapsto \mathbb C^n$ is a {\it dual } to a complex geodesic $f$ if $h\in H^1(\mathbb D,\mathbb C^n)$ is not identically equal to $0$ and the following relation is satisfied: 
\begin{equation}\label{equation:dual}
    (z-f(G(z))\bullet h(G(z))=0,
\end{equation}
where $G$ is a left inverse to $f$.

Lempert proved that, under suitable assumptions on $D$, the above equation well defines the holomorphic left inverse $G$. That is the value $G(z)$, $z\in D$, is given as the only $\lambda\in\mathbb D$ such that
\begin{equation}\label{equation:left-inverse}
    (z-f(\lambda))\bullet h(\lambda)=0.
\end{equation}
The above discussion as well as an alternate approach to the proof of the Lempert theorem by Royden-Wong (\cite{Roy-Won 1983}) may be found not only in Lempert papers but also in \cite{Jar-Pfl 2013} and \cite{Aba 1989}.

In the paper we make a bit different approach to the existence of Lempert left inverses. We replace the duals with objects defined by the derivative of a left inverse and then we produce from the left inverse the Lempert left inverse and we present sufficient conditions for the uniqueness of left inverses (Theorem~\ref{theorem:lempert-left-inverses}). We then produce a family of Lempert left inverses connecting existing two Lempert left inverses to the same complex geodesic (Theorem~\ref{theorem:family-lempert-left-inverses}).

Further in Section~\ref{section:regularity-left-inverses} we present results on holomorphic extensions of left inverses through the boundary.

In Section~\ref{section:examples} we analyze special domains of the bidisc and the symmetrized bidisc where for some complex geodesics the structure of the class of Lempert left inverses is studied.

Additionally, we recall examples of left inverses in the ball showing the possible obstructions to the regularity of left inverses even in very regular domains.

\section{Left inverses induce Lempert left inverses}
As already mentioned we follow below the idea of Lempert but with a small modification. Namely, instead of considering the dual we shall work with some other function that coincides with the dual in regular situation. The attitude we present has advantage of being applicable in more situations and lets us conclude more properties of Lempert left inverses constructed. In many situations the functions coincide (up to some normalization constant) with the duals -- this is for instance the case of strongly convex domains. Unlike the duals the functions we examine behave well under the increasing sequence of domains.

We start with the following obvious observation. Having given a complex geodesic $f:\mathbb D\to D$ and a corresponding left inverse $G:D\to\mathbb D$ we get the equality
\begin{equation}
    G^{\prime}(f(\lambda))\bullet f^{\prime}(\lambda)=1,\; \lambda\in\mathbb D,
\end{equation}
where $w\bullet z:=\sum_{j=1}^nw_jz_j$, $w=(w_1,\ldots,w_n),\; z=(z_1,\ldots,z_n)\in\mathbb C^n$
It follows from a result of \cite{Aba 1989} (Lemma 2.6.44) that in regular situation ($D$ being strongly convex) the dual $h$ from the theory of Lempert is unique (up to a positive constant) and related with $G$ by the formula:
\begin{equation}
    h(\lambda)=G^{\prime}(f(\lambda)), \lambda\in\mathbb D.
\end{equation}

To avoid the necessity of taking the radial values of functions involved (which is a priori not guaranteed for the general case for the function $G^{\prime}\circ f$) we make below observation that lets us consider the Kobayashi balls rather than the whole domain and then we need not bother about the boundary regularity of the functions involved.

Recall that the sublevel sets of the Kobayashi distance with one argument fixed in convex domains behave well. More specifically, for the fixed $p\in D$, where $D$ is a bounded convex domain, the sublevel sets ({\it Kobayashi balls})
\begin{equation}
    B_{k_D^*}(p,R):=\{z\in D:k_D^*(p,z)<R\}
\end{equation}
are convex and 
\begin{equation}
    B_{k_D^*}(p,\rho R)=B_{k_{B_{k_D^*}(p,R)}^*}(p,\rho), \; 0<R<1, 0<\rho<1.
\end{equation}

As a consequence of the method of Royden-Wong (as presented in e. g. \cite{Jar-Pfl 2013}) we get the following.

\begin{theorem}\label{theorem:lempert-left-inverses} Let $D$ be a bounded convex domain. Let $f:\mathbb D\to D$ be a complex geodesic and let $G$ be its left inverse. Then for all $z\in D$ there is exactly one $\lambda=:H(z)\in\mathbb D$ such that
\begin{equation}
    (z-f(\lambda))\bullet G^{\prime}(f(\lambda))=0
\end{equation}
and $H:D\mapsto\mathbb D$ is holomorphic. 

Consequently, $H$ is the Lempert left inverse to $f$ with $H^{\prime}(f(\lambda))=G^{\prime}(f(\lambda))$, $\lambda\in\mathbb D$.

Moreover, if $G$ is a Lempert left inverse then $H\equiv G$. 

In the case the supporting hyperplanes are determined uniquely for the Kobayashi balls centered at $f(0)$ then there is only one Lempert left inverse to $f$ (this holds for instance in the case of strongly convex domains).
\end{theorem}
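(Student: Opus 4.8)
The plan is to prove the statement in four stages, corresponding to the four sentences of the theorem. I will work inside Kobayashi balls centered at $f(0)$, as the authors suggest, to avoid boundary-regularity issues for $G^\prime\circ f$.

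First I would establish existence and uniqueness of $H(z)$. Fix $z\in D$; since $D$ is bounded convex, $z$ lies in some Kobayashi ball $B:=B_{k_D^*}(f(0),R)$, which is again bounded and convex, and $f$ restricted to a smaller disc is a complex geodesic of $B$ with the same left inverse $G$ (by the rescaling property of Kobayashi balls quoted before the theorem). On $B$ the function $G^\prime\circ f$ is holomorphic up to the boundary, so I can apply the Royden--Wong machinery directly. The key computation is the obvious identity $G^\prime(f(\lambda))\bullet f^\prime(\lambda)=1$, which shows $G^\prime(f(\lambda))\neq 0$, so the affine hyperplane $f(\lambda)+\ker G^\prime(f(\lambda))$ is well defined for each $\lambda$. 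The Royden--Wong argument (convexity plus the geodesic property) gives that these hyperplanes are \emph{supporting} to the nested Kobayashi balls at $f(\lambda)$, hence disjoint for distinct $\lambda$ and sweeping out $D$; this yields exactly one $\lambda=H(z)$ solving $(z-f(\lambda))\bullet G^\prime(f(\lambda))=0$. Holomorphy of $H$ follows from the holomorphic implicit function theorem: the $\lambda$-derivative of $(z-f(\lambda))\bullet G^\prime(f(\lambda))$ at the solution is nonzero because $f^\prime(\lambda)\bullet G^\prime(f(\lambda))=1$ dominates the term $(z-f(\lambda))\bullet (G^\prime\circ f)^\prime(\lambda)$ when the relevant hyperplanes are genuinely supporting.

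Next I would verify that $H$ is a Lempert left inverse with the prescribed derivative. By construction $H^{-1}(\lambda)=(f(\lambda)+\ker G^\prime(f(\lambda)))\cap D$ is the intersection of an affine hyperplane with $D$, which is precisely the definition of ``Lempert.'' Setting $z=f(\lambda)$ in the defining equation gives $H(f(\lambda))=\lambda$, so $H$ is a left inverse. To compute $H^\prime(f(\lambda))$, I would differentiate the identity $(z-f(H(z)))\bullet G^\prime(f(H(z)))=0$ in $z$ at $z=f(\lambda)$; the terms involving $H^\prime$ cancel because of the defining equation, leaving $G^\prime(f(\lambda))$ as the gradient of $H$ at $f(\lambda)$, i.e. $H^\prime(f(\lambda))=G^\prime(f(\lambda))$. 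For the third claim, if $G$ is itself Lempert, then $G$ and $H$ are both left inverses whose fibers are affine hyperplanes with the \emph{same} gradient $G^\prime(f(\lambda))$ along the geodesic; since a Lempert left inverse is determined by $G^\prime\circ f$ (as noted in the introduction), $H\equiv G$.

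For the final uniqueness assertion, I would argue as follows. Any Lempert left inverse $G$ to $f$ gives, for each $\lambda$, an affine hyperplane $f(\lambda)+\ker G^\prime(f(\lambda))$ whose trace on $D$ is the fiber $G^{-1}(\lambda)$; by the Royden--Wong theory this hyperplane must support the Kobayashi ball $B_{k_D^*}(f(0),R)$ at $f(\lambda)$ for the appropriate $R$. In particular, at $\lambda=0$ the hyperplane supports every Kobayashi ball centered at $f(0)$. If the supporting hyperplane at $f(0)$ to these balls is unique, then $\ker G^\prime(f(0))$ is forced, and after normalizing by the constraint $f^\prime(0)\bullet G^\prime(f(0))=1$ the full gradient $G^\prime(f(0))$ is determined; propagating along $f$ via the same supporting-hyperplane uniqueness (or via the rigidity of the construction) pins down $G^\prime(f(\lambda))$ for all $\lambda$, and hence $G$ by the determination property. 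Strong convexity guarantees strictly convex boundaries of the Kobayashi balls, hence unique supporting hyperplanes, giving the stated example. I expect the main obstacle to be making precise the claim that the Lempert fibers are exactly the supporting hyperplanes of the nested Kobayashi balls—that is, importing the Royden--Wong identification cleanly into the Kobayashi-ball setting and justifying that no nonsupporting hyperplane can occur; the holomorphy and derivative computations are then routine.
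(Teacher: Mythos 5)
Your overall strategy (work in Kobayashi balls, show the hyperplanes $f(\lambda)+\ker G^{\prime}(f(\lambda))$ support them, invoke the Royden--Wong lemma, pass to the limit) is the same as the paper's, but the one step you explicitly defer --- ``making precise the claim that the Lempert fibers are exactly the supporting hyperplanes of the nested Kobayashi balls'' --- is in fact the entire content of the proof, and you give no argument for it. The paper supplies it as follows: since $G$ is a left inverse with $G(f(0))=0$, the Schwarz--Pick inequality gives $|G(z)|\le k_D^*(f(0),z)<R$ on $B_{k_D^*}(f(0),R)$, so the real hypersurface $M_R:=\{z\in D:|G(z)|^2=R^2\}$ is disjoint from that convex ball while $f(\lambda_0)\in M_R\cap\partial B_{k_D^*}(f(0),R)$ for $|\lambda_0|=R$. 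The gradient of $|G|^2$ at $f(\lambda_0)$ is $\overline{G^{\prime}(f(\lambda_0))}\,\lambda_0$, so the affine tangent space $f(\lambda_0)+T_{f(\lambda_0)}M_R$ (which contains the complex hyperplane $f(\lambda_0)+\ker G^{\prime}(f(\lambda_0))$) is disjoint from the ball by convexity, hence supporting, yielding $\Re\bigl((z-f(\lambda))\bullet(G^{\prime}(f(\lambda))/\lambda)\bigr)<0$ for $|\lambda|=R$. This strict inequality on the circle is exactly the hypothesis of Lemma 11.2.2 of Jarnicki--Pflug, which then delivers existence, uniqueness and holomorphy of $H_R$ at once; your substitute arguments for those conclusions do not stand on their own. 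In particular, supporting hyperplanes of a convex body at distinct boundary points are generally \emph{not} disjoint, so ``disjoint for distinct $\lambda$ and sweeping out $D$'' is not a valid route to uniqueness (the correct mechanism is an argument-principle count), and your claim that $f^{\prime}(\lambda)\bullet G^{\prime}(f(\lambda))=1$ ``dominates'' $(z-f(\lambda))\bullet(G^{\prime}\circ f)^{\prime}(\lambda)$ is unsubstantiated.

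There is also a concrete error in your uniqueness argument: at $\lambda=0$ the fiber hyperplane passes through $f(0)$, the \emph{center} of every Kobayashi ball $B_{k_D^*}(f(0),R)$, so it cannot support any of them; the supporting-hyperplane uniqueness must be applied at $f(\lambda)$ for $\lambda\ne 0$, where $f(\lambda)\in\partial B_{k_D^*}(f(0),|\lambda|)$, and this already determines $\ker H^{\prime}(f(\lambda))$ (hence $H^{\prime}(f(\lambda))$ after normalizing against $f^{\prime}(\lambda)$) for all $\lambda\ne 0$, which suffices. Your derivative computation $H^{\prime}(f(\lambda))=G^{\prime}(f(\lambda))$ and the deduction $H\equiv G$ when $G$ is Lempert are fine and match the paper.
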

\begin{proof}
    Define the real hypersurface $M_R:=\{z\in D:|G(z)|^2=R^2\}$, $0<R<1$. Note that $M_R$ is disjoint from the convex domain $B_{k_D^*}(f(0),R)$ and for any $|\lambda_0|=R$ we have $f(\lambda_0)\in M_R\cap\partial(B_{k_D^*}(f(0),R))$, and $M_R$ is smooth (even real analytic) near $f(\partial\triangle(0,R))$. Consequently, the vector $\overline{G^{\prime}(f(\lambda_0))}G(f(\lambda_0))=\overline{G^{\prime}(f(\lambda_0))}\lambda_0$ is orthogonal to $M_R$ at $f(\lambda_0)$.  As $B_{k_D^*(f(0),R)}$ is a convex domain we easily get that the affine tangent space $f(\lambda_0)+T_{f(\lambda_0)}M_R$ to $M_R$ at $f(\lambda_0)$, $|\lambda_0|=R$, is disjoint from $B_{k_D^*(f(0),R)}$. Consequently, $f(\lambda_0)+T_{f(\lambda_0)}M_R$ is a supporting real affine hyperplane to $B_{k_D^*}(f(0),R)$ at $f(\lambda_0)$ and then
    \begin{equation}
        \operatorname{Re}\left((z-f(\lambda))\bullet \left(G^{\prime}(f(\lambda)))/\lambda\right)\right)<0, \; |\lambda|=R.
        \end{equation}
        Now we may use Lemma 11.2.2 from \cite{Jar-Pfl 2013} to get that there is the (Lempert) left inverse $H_R$ to $f_{|\triangle(0,R)}$ in $B_{k^*_D}(f(0),R)$. And thus passing with $R\to 1$ we easily get the Lempert left inverse $H$ to $f$ in $D$. 
    
    The equality of derivatives follows from the fact that the fiber of $H$ over $\lambda$ is determined by the kernel of $G^{\prime}(f(\lambda))$ and the uniformizing equality $H^{\prime}(f(\lambda)\bullet f^{\prime}(\lambda)=G^{\prime}(f(\lambda))\bullet f^{\prime}(\lambda)=1$.
    
The equality of $G$ and $H$ in the case $G$ is the Lempert left inverse follows directly from the equality of derivatives at $f(\lambda)$.

As the kernel of $H^{\prime}(f(\lambda))$ is a complex supporting hyperplane to $B_{k_D^*}(f(0),|\lambda|)$ at $f(\lambda)$ in the case the supporting hyperplanes are determined uniquely there is only one Lempert left inverse to a given complex geodesic.
\end{proof}

Now we show how we may produce a one parameter family of left inverses under the assumption that there exist two different Lempert left inverses.

\begin{remark} We start with the following obvious observation.
    For the given complex geodesic $f:\mathbb D\to D$ let $G_0, G_1:D\to\mathbb D$ be its different left inverses. Then certainly the functions (convex combinations of $G_0$ and $G_1$) $(1-t)G_0+tG_1$, $t\in[0,1]$, are (pairwise different) left inverses to $f$. However, in the case $G_0,G_1$ are Lempert left inverses their convex combination need not be a Lempert left inverse. But utilizing the previous theorem in place of the convex combination of original left inverses we may produce a one parameter family of Lempert left inverses joining the original Lempert left inverses. 
\end{remark}

\begin{theorem}\label{theorem:family-lempert-left-inverses} Let $D\subset\mathbb C^n$ be a bounded convex domain and let $f:\mathbb D\to D$ be a complex geodesic with two left inverses $G_0$ and $G_1$. Then there is a one parameter family of Lempert left inverses $H_t:D\to\mathbb D$, $t\in[0,1]$, for the complex geodesic $f$ connecting the corresponding left inverses $H_0$ and $H_1$.
\end{theorem}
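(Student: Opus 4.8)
The plan is to interpolate the two given left inverses at the level of their derivatives along $f$ and then to run each interpolant through Theorem~\ref{theorem:lempert-left-inverses} so as to produce a genuinely Lempert left inverse. Concretely, for $t\in[0,1]$ set
\begin{equation}
    G_t:=(1-t)G_0+tG_1.
\end{equation}
Since $\mathbb D$ is convex and $G_0,G_1:D\to\mathbb D$ are holomorphic, $G_t$ maps $D$ holomorphically into $\mathbb D$, and from $G_0(f(\lambda))=G_1(f(\lambda))=\lambda$ we obtain $G_t(f(\lambda))=\lambda$; hence each $G_t$ is a left inverse to $f$. Applying Theorem~\ref{theorem:lempert-left-inverses} to $G_t$ yields a Lempert left inverse $H_t:D\to\mathbb D$ with
\begin{equation}
    H_t^{\prime}(f(\lambda))=G_t^{\prime}(f(\lambda))=(1-t)G_0^{\prime}(f(\lambda))+tG_1^{\prime}(f(\lambda)),\quad\lambda\in\mathbb D.
\end{equation}
At the endpoints this recovers the Lempert left inverses $H_0,H_1$ attached to $G_0,G_1$ (and, should $G_0,G_1$ themselves be Lempert, the ``Moreover'' clause of Theorem~\ref{theorem:lempert-left-inverses} gives $H_0\equiv G_0$ and $H_1\equiv G_1$, matching the preceding remark).

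What remains is to verify that $t\mapsto H_t$ is a continuous path, which is the substance of the word ``connecting'', and I would establish this by a normal families argument anchored on the uniqueness in Theorem~\ref{theorem:lempert-left-inverses}. Recall that $H_t(z)$ is characterized as the unique $\lambda\in\mathbb D$ solving
\begin{equation}\label{eq:Hdef}
    (z-f(\lambda))\bullet G_t^{\prime}(f(\lambda))=0,
\end{equation}
where $\lambda\mapsto G_t^{\prime}(f(\lambda))$ is holomorphic in $\lambda$ and depends affinely, in particular continuously, on $t$. Fix $t_0\in[0,1]$ and a sequence $t_n\to t_0$. The maps $H_{t_n}$ form a bounded, hence normal, family of holomorphic maps $D\to\mathbb D$; let $\widetilde H$ be any locally uniform subsequential limit. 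Since $H_{t_n}(f(0))=0$ for all $n$, we have $\widetilde H(f(0))=0$, and because $\widetilde H$ is holomorphic into $\overline{\mathbb D}$ with an interior value attained at $f(0)$, the maximum principle forces $\widetilde H(D)\subset\mathbb D$. Passing to the limit in \eqref{eq:Hdef} along the subsequence, and using continuity of $f$, continuity of $G_0^{\prime},G_1^{\prime}$ at interior points, and $t_n\to t_0$, shows that $\lambda=\widetilde H(z)\in\mathbb D$ solves $(z-f(\lambda))\bullet G_{t_0}^{\prime}(f(\lambda))=0$. By the uniqueness in Theorem~\ref{theorem:lempert-left-inverses}, $\widetilde H\equiv H_{t_0}$. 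As every subsequential limit coincides with $H_{t_0}$, the whole sequence converges, so $t\mapsto H_t$ is continuous in the topology of locally uniform convergence.

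The one point that demands care — the main obstacle — is the passage to the limit in \eqref{eq:Hdef}: one must guarantee that the solutions $H_{t_n}(z)$ do not drift toward $\partial\mathbb D$, since the coefficient $G_{t_0}^{\prime}(f(\widetilde H(z)))$ must be evaluated at an interior point $f(\widetilde H(z))\in D$ where $G_0^{\prime},G_1^{\prime}$ are continuous. This is precisely what the normality argument secures: the identity $\widetilde H(f(0))=0$ together with the maximum principle pins $\widetilde H(D)$ inside $\mathbb D$, so the interior evaluation is legitimate and the convex interpolation of the normals $G_{t_n}^{\prime}(f(\cdot))$ converges as required, after which uniqueness closes the argument.
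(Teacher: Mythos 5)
Your proof is correct and follows the same route as the paper: form the convex combinations $G_t=(1-t)G_0+tG_1$, observe that each is a left inverse to $f$, and feed each into Theorem~\ref{theorem:lempert-left-inverses} to obtain Lempert left inverses $H_t$ with $H_t^{\prime}(f(\lambda))=(1-t)G_0^{\prime}(f(\lambda))+tG_1^{\prime}(f(\lambda))$. The paper's proof stops there, whereas you additionally verify --- via normality of the family $\{H_{t_n}\}$, the normalization $H_{t_n}(f(0))=0$ combined with the maximum principle to keep subsequential limits inside $\mathbb D$, and the uniqueness clause of Theorem~\ref{theorem:lempert-left-inverses} --- that $t\mapsto H_t$ is continuous for locally uniform convergence; this soundly justifies the word ``connecting,'' which the paper leaves implicit.
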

\begin{proof} The functions $G_t:=(1-t)G_0+tG_1$ are left inverses for $f$. By Theorem~\ref{theorem:lempert-left-inverses} there are Lempert left inverses $H_t$ satisfying the equation:
\begin{equation}
    H_t^{\prime}(f(\lambda))=G_t^{\prime}(f(\lambda))=(1-t)G_0^{\prime}(f(\lambda))+tG_1^{\prime}(f(\lambda)),\;\lambda\in\mathbb D,
\end{equation}
which finishes the proof.
\end{proof}
\begin{remark} The above theorem is non-trivial even in the case $G_0$ and $G_1$ are different Lempert left inverses and it produces a one parameter family of Lempert left inverses connecting two given ones giving us a version of the convexity property in the class of Lempert left inverses. In other words any two Lempert left inverses for the given complex geodesic in the bounded convex domain are homotopic.
\end{remark}


\section{Regularity of (Lempert) left inverses}\label{section:regularity-left-inverses}

 

 Let us recall that the regularity properties of complex geodesics $f$, their duals $h$  and Lempert left inverses $G$ in strongly (linearly) convex domain $D$ may be found for instance in \cite{Lem 1981}, \cite{Lem 1982}, \cite{Lem 1984}, \cite{Aba 1989}). Let us list some of the properties of the functions mentioned.

 \begin{itemize}
     \item $f$  extends to a H\"older mapping on  $\overline{\mathbb D}$,\\
     \item the dual $h$ is uniquely determined (up to a multiple scalar) and it extends as a H\"older mapping on $\overline{\mathbb D}$ -- actually, up to a positive constant $h(\lambda)=G^{\prime}(f(\lambda))$, $\lambda\in\mathbb D$,\\
     \item $f^{\prime}(\lambda)\bullet h(\lambda)=C>0$, $\lambda\in\mathbb D$.\label{property:constant}
 \end{itemize}

Other results on regular extensions of complex geodesics may be found for instance in \cite{Mer 1993}, \cite{Cha-Hu-Lee 1988}, \cite{Bra-Pat-Tra 2009}, \cite{Hua-Wan 2022}, \cite{Bha 2016}, \cite{Zim 2022}. 

Let us prove now a result on holomorphic extension of the left inverse through a point of the boundary of a complex geodesic. Variations on the regular extensions of left inverses may be found in \cite{Aba 1989} (Theorem 2.6.43).

\begin{proposition} Let $D$ be a bounded convex domain in $\mathbb C^n$ and let $f$ be a complex geodesic in $D$, $h$ its dual and $G$ the Lempert left inverse to $f$ determined by $h$ (which in particular means that it satisfies the equation (\ref{equation:dual})). Assume further that $f^{\prime}\bullet h\equiv C>0$ on $\mathbb D$. Fix $z_0\in\partial D$ and let $\lambda_0\in\overline{\mathbb D}$ be such that $\lambda_0$ belongs to a cluster set $G^*(z_0)$. Assume that $f$ and $h$ extend holomorphically through $\lambda_0$ (which is trivially satisfied if $\lambda_0\in\mathbb D$). Then $G$ extends holomorphically through $z_0$.
\end{proposition}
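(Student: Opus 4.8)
The plan is to realize $G$ near $z_0$ as the solution of the defining equation \eqref{equation:left-inverse}, treating it now as a holomorphic equation in $\lambda$ with $z$ as a parameter ranging over a full neighborhood of $z_0$ in $\mathbb C^n$. Write $g_z(\lambda):=(z-f(\lambda))\bullet h(\lambda)$, so that for $z\in D$ the value $G(z)$ is the unique $\lambda\in\mathbb D$ with $g_z(\lambda)=0$. Since $f$ and $h$ extend holomorphically through $\lambda_0$, the function $(z,\lambda)\mapsto g_z(\lambda)$ is holomorphic on a neighborhood of $(z_0,\lambda_0)$ in $\mathbb C^n\times\mathbb C$; choosing $z_k\in D$ with $z_k\to z_0$ and $G(z_k)\to\lambda_0$ and passing to the limit in $g_{z_k}(G(z_k))=0$ gives $g_{z_0}(\lambda_0)=0$. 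The goal is to produce a holomorphic $\lambda=\lambda(z)$ near $z_0$ with $g_z(\lambda(z))\equiv0$ and $\lambda(z_0)=\lambda_0$, and then to identify $\lambda(z)$ with $G$ on the $D$-side.

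Everything reduces to showing that $\lambda_0$ is a \emph{simple} zero of $g_{z_0}$. Indeed, using $f'\bullet h\equiv C$ one computes
\[
g_{z_0}'(\lambda_0)=-f'(\lambda_0)\bullet h(\lambda_0)+(z_0-f(\lambda_0))\bullet h'(\lambda_0)=-C+(z_0-f(\lambda_0))\bullet h'(\lambda_0),
\]
and if this is nonzero the holomorphic implicit function theorem yields a holomorphic solution $\lambda(z)$ near $z_0$. Since near $\lambda_0$ the zero of $g_{z_k}$ is then locally unique and $G(z_k)\in\mathbb D$ is such a zero, the two coincide; the identity theorem forces $\lambda\equiv G$ on the component of $\{z:\lambda(z)\in\mathbb D\}$ adjacent to $z_0$, so $\lambda$ is the desired holomorphic extension. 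Equivalently, once the zero count is known to be one, $\lambda(z)=\frac1{2\pi i}\oint_{\partial U}\lambda\,\frac{g_z'(\lambda)}{g_z(\lambda)}\,d\lambda$ over a small circle $\partial U$ around $\lambda_0$ gives the extension directly.

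To prove simplicity I distinguish two cases. If $\lambda_0\in\mathbb D$, I first record that for every interior $z$ the zero $G(z)$ is simple: by the supporting-hyperplane inequality established in the proof of Theorem~\ref{theorem:lempert-left-inverses}, $\operatorname{Re}(g_z(\lambda)/\lambda)<0$ on each circle $|\lambda|=R$ with $z\in B_{k_D^*}(f(0),R)$, so $g_z(\lambda)/\lambda$ maps that circle into the left half-plane and hence has winding number $0$ about the origin; counting this against the simple pole at $0$ shows $g_z$ has exactly one zero inside, necessarily simple. Letting $z_k\to z_0$, Hurwitz's theorem applied on a small disc about $\lambda_0\in\mathbb D$ then shows $g_{z_0}$ (which is not identically zero) has a single simple zero there, i.e. $g_{z_0}'(\lambda_0)\neq0$.

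If $\lambda_0\in\partial\mathbb D$, passing to the limit $z_k\to z_0$ in the same supporting-hyperplane inequalities gives, for the limiting boundary hyperplanes, $\Psi(\lambda):=\operatorname{Re}(\overline\lambda\,g_{z_0}(\lambda))\le0$ for $\lambda\in\partial\mathbb D$ near $\lambda_0$, with $\Psi(\lambda_0)=0$; thus $\lambda_0$ maximizes $\Psi$ along the circle and $\tfrac{d}{d\theta}\Psi\big|_{\lambda_0}=-\operatorname{Im}g_{z_0}'(\lambda_0)=0$, so $g_{z_0}'(\lambda_0)$ is real. The main obstacle is to exclude $g_{z_0}'(\lambda_0)=0$ in this boundary case, since the first-order boundary-maximum condition alone permits a double zero. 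I would resolve it by a Hopf-lemma argument for the harmonic function $\operatorname{Re}(g_{z_0}(\lambda)/\lambda)$, which requires upgrading the above inequality to strict negativity on the inner side of $\partial\mathbb D$ near $\lambda_0$; I expect this to be the delicate point, the degeneration of the interior negativity region as $z_k\to\partial D$ being exactly what has to be controlled. Note that the difficulty disappears under strict convexity: then the complex supporting hyperplane $H_{\lambda_0}=\{z:(z-f(\lambda_0))\bullet h(\lambda_0)=0\}$ meets $\overline D$ only at $f(\lambda_0)$, forcing $z_0=f(\lambda_0)$ and hence $g_{z_0}'(\lambda_0)=-C\neq0$ immediately. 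Granting simplicity, the extension constructed above shows that $G$ extends holomorphically through $z_0$.
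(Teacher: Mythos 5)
Your overall strategy --- realizing $G$ as the solution in $\lambda$ of $(z-f(\lambda))\bullet h(\lambda)=0$ and applying the holomorphic implicit function theorem --- is the same as the paper's, and your treatment of the interior case $\lambda_0\in\mathbb D$ (unique simple zero of $g_z$ for $z\in D$ via the winding-number count coming from the supporting-hyperplane inequality, then Hurwitz) is a correct and in fact more detailed completion than the paper supplies. The problem is the boundary case $\lambda_0\in\partial\mathbb D$, which is the only non-trivial case of the statement (the hypothesis that $f$ and $h$ extend through $\lambda_0$ is, as the proposition itself notes, automatic when $\lambda_0\in\mathbb D$). There you correctly reduce everything to $g_{z_0}'(\lambda_0)=-C+(z_0-f(\lambda_0))\bullet h'(\lambda_0)\neq 0$, extract only the first-order information that $g_{z_0}'(\lambda_0)$ is real, and then explicitly defer the exclusion of a double zero to a Hopf-lemma argument that you acknowledge you have not carried out. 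As it stands this is a genuine gap: without strict negativity of $\operatorname{Re}\left(g_{z_0}(\lambda)/\lambda\right)$ on an interior neighbourhood of $\lambda_0$ --- and, as you yourself observe, the interior negativity region coming from the Kobayashi balls degenerates as $z_k\to\partial D$ --- the Hopf lemma does not apply, and nothing in your argument prevents $g_{z_0}$ from vanishing to order two at $\lambda_0$, in which case no single-valued holomorphic solution $\lambda(z)$ exists near $z_0$. (Your parenthetical claim that $g_{z_0}\not\equiv 0$ is also left unjustified, though it is a much smaller issue.)

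It is worth noting how the paper sidesteps the difficulty: it evaluates $\partial\Phi/\partial\lambda$ not at $(z_0,\lambda_0)$ but at $(f(\lambda_0),\lambda_0)$, where the troublesome term $(z_0-f(\lambda_0))\bullet h'(\lambda_0)$ is absent and the derivative is exactly $-f'(\lambda_0)\bullet h(\lambda_0)=-C\neq 0$, and then invokes the implicit mapping theorem at that point. Your obstacle arises precisely because you insist on working at a general $z_0$ in the fibre over $\lambda_0$ rather than at $f(\lambda_0)$ itself; your closing remark that strict convexity forces $z_0=f(\lambda_0)$, making the derivative $-C$ outright, shows you have identified exactly where the two situations diverge. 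But for a general bounded convex domain and $z_0\neq f(\lambda_0)$ your argument does not close, so the proposal is incomplete in the main case it needs to cover.
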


\begin{proof}
Actually, consider the equation
\begin{equation}
    \Phi(z,\lambda):=(z-f(\lambda))\bullet h(\lambda)=0
\end{equation}
that for $z\in D$ and $\lambda\in D$ defines a Lempert left inverse $H$, $H(z):=\lambda$.
We know that $\Phi(f(\lambda_0),\lambda_0)=0$ and $\frac{\partial \Phi}{\partial\lambda}(f(\lambda_0),\lambda_0)=-f^{\prime}(\lambda_0)\bullet h(\lambda_0)$. The last expression is different from $0$.

Then the implicit mapping theorem finishes the proof.
\end{proof}
In the proof of the above proposition the crucial property that enabled us to use implicit function theorem was the fact that $f^{\prime}\bullet h\equiv C$ on $\mathbb D$. Note that replacing in the assumptions $h(\lambda)$ with $G^{\prime}(f(\lambda))$ the property will be satisfied so combining the above result (and its proof) with Theorem~\ref{theorem:lempert-left-inverses} we may formulate the following.

\begin{corollary}
    Let $D$ be a bounded convex domain in $\mathbb C^n$, let $f$ be a complex geodesic in $D$ and let $G$ be its left inverse. Assume that $f$ extends holomorphically through $\lambda_0\in\overline{\mathbb D}$ and $G$ extends holomorphically through $f(\lambda_0)\in\partial D$. Then the corresponding Lempert left inverse $H$ (the one existing by Theorem~\ref{theorem:lempert-left-inverses}) extends holomorphically through $f(\lambda_0)$. 
\end{corollary}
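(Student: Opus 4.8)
The plan is to observe that the required $H$ is nothing but the Lempert left inverse attached to the ``dual'' $\tilde{h}(\lambda):=G'(f(\lambda))$, and then to apply the preceding Proposition verbatim with $h$ replaced by $\tilde{h}$. Indeed, by Theorem~\ref{theorem:lempert-left-inverses} the value $H(z)$ is the unique $\lambda\in\mathbb D$ solving $(z-f(\lambda))\bullet\tilde{h}(\lambda)=0$, so $H$ satisfies the defining relation (\ref{equation:dual}) with $\tilde{h}$ in place of $h$; thus $H$ is precisely ``the Lempert left inverse to $f$ determined by $\tilde{h}$'' in the language of that Proposition. Moreover the uniformizing identity $G'(f(\lambda))\bullet f'(\lambda)=1$ reads $\tilde{h}(\lambda)\bullet f'(\lambda)\equiv 1$, which supplies the hypothesis $f'\bullet\tilde{h}\equiv C>0$ (here $C=1$) that makes the non-degeneracy $\frac{\partial\Phi}{\partial\lambda}(f(\lambda_0),\lambda_0)=-f'(\lambda_0)\bullet\tilde{h}(\lambda_0)=-1\neq 0$ available.

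It then remains to verify the two extension-type hypotheses of the Proposition for the pair $(f,\tilde{h})$ at $\lambda_0$. The geodesic $f$ extends holomorphically through $\lambda_0$ by assumption, so I only need the analogous statement for $\tilde{h}$, which I would obtain by composition. Since $G$ extends holomorphically through $f(\lambda_0)\in\partial D$, its complex gradient $G'$ extends holomorphically on a full neighborhood of $f(\lambda_0)$ in $\mathbb C^n$; and since $f$ extends holomorphically through $\lambda_0$, continuity keeps $f(\lambda)$ inside that neighborhood for $\lambda$ near $\lambda_0$, so $\tilde{h}=G'\circ f$ extends holomorphically through $\lambda_0$.

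Finally I would check the cluster-set condition $\lambda_0\in H^*(f(\lambda_0))$ that the Proposition imposes on the function being extended. This is immediate from the Lempert left-inverse property $H(f(\lambda))=\lambda$: letting $\lambda\to\lambda_0$ with $\lambda\in\mathbb D$, the points $z=f(\lambda)\in D$ tend to $f(\lambda_0)\in\partial D$ while $H(z)=\lambda\to\lambda_0$, so $\lambda_0$ is a cluster value of $H$ at $f(\lambda_0)$. With all hypotheses in place, applying the Proposition to $H$ and $\tilde{h}$ yields the holomorphic extension of $H$ through $f(\lambda_0)$.

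The conceptual content is therefore a clean reduction to the Proposition, and the one genuine point to be careful about is the holomorphic extension of $\tilde{h}=G'\circ f$: one must combine the two separate extension hypotheses correctly and make sure $f(\lambda)$ lands in the domain of the extended gradient $G'$. I would also flag that, inside the Proposition's argument, passing from agreement of $H$ with the implicit-function solution near $f(\lambda_0)$ to agreement on an open set (so that the identity theorem applies) relies on the convexity of $D$, which guarantees that $U\cap D$ is connected for a small ball $U$ about $f(\lambda_0)$; this is where convexity, already standing in the hypotheses, is quietly used.
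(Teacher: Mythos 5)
Your proposal is correct and follows essentially the same route as the paper: the Corollary is obtained precisely by applying the preceding Proposition with the dual $h$ replaced by $\tilde h=G'\circ f$, noting that Theorem~\ref{theorem:lempert-left-inverses} identifies $H$ as the Lempert left inverse determined by $\tilde h$ and that $f'\bullet\tilde h\equiv 1$ makes the implicit function theorem applicable. Your additional checks (holomorphy of $G'\circ f$ near $\lambda_0$ by composition, and the cluster-set condition via $H(f(\lambda))=\lambda$) are exactly the details the paper leaves implicit.
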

The above result means that the holomorphic extension property of the left inverse implies the holomorphic extension property of the corresponding Lempert left inverse.

We close this subsection with some further properties. First we have

\begin{corollary} Let $D$ be a strongly convex domain in $\mathbb C^n$. Assume that the complezx geodesic $f$ and its dual extend holomorphically through the boundary. Then the corresponding Lempert left inverse extends holomorphically through the boundary of $D$.
\end{corollary}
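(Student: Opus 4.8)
The plan is to deduce the statement directly from the Proposition by checking that its hypotheses hold at every point of $\partial D$. Since $D$ is strongly convex it is in particular a bounded convex Lempert domain, so the complex geodesic $f$ possesses a dual $h$, unique up to a positive scalar, and this dual satisfies the normalization $f^{\prime}\bullet h\equiv C>0$ recorded in the list of properties above. Up to that constant one has $h(\lambda)=G^{\prime}(f(\lambda))$, so the Lempert left inverse determined by $h$ in the sense of equation~(\ref{equation:dual}) is precisely the map $G$ whose extension we seek.

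First I would fix an arbitrary boundary point $z_0\in\partial D$ and consider the cluster set $G^*(z_0)$. Because $G$ maps $D$ into $\mathbb D$, it is bounded, so along any sequence in $D$ converging to $z_0$ the values of $G$ subconverge in $\overline{\mathbb D}$; hence $G^*(z_0)$ is a nonempty subset of $\overline{\mathbb D}$, and I may select some $\lambda_0\in G^*(z_0)$. By the hypothesis of the corollary, $f$ and $h$ extend holomorphically through the whole boundary of $\mathbb D$, and therefore through this particular $\lambda_0\in\overline{\mathbb D}$ (trivially so if $\lambda_0\in\mathbb D$). At this stage every assumption of the Proposition is in force at $z_0$, and applying it yields a holomorphic extension of $G$ across $z_0$.

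Finally, since $z_0\in\partial D$ was arbitrary, for each boundary point I obtain a neighborhood to which $G$ extends holomorphically. Each such local extension agrees with $G$ on its intersection with $D$, so by the identity theorem the local extensions coincide on overlaps and glue into a single holomorphic map defined on a neighborhood of $\overline{D}$. This is the sought holomorphic extension of $G$ through $\partial D$; uniqueness of the Lempert left inverse in the strongly convex case (as in Theorem~\ref{theorem:lempert-left-inverses}) makes the phrase ``the corresponding Lempert left inverse'' unambiguous.

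The step demanding the most care is the verification that the Proposition applies at every boundary point: one must ensure that a cluster value $\lambda_0$ can be chosen through which both $f$ and $h$ extend. Here the assumed global holomorphic extendability of $f$ and $h$ across $\partial\mathbb D$ is exactly what removes the obstruction, since then \emph{any} $\lambda_0\in\overline{\mathbb D}$ works; in particular no delicate boundary analysis of $G$ itself is needed beyond the mere nonemptiness of its cluster sets. The concluding gluing argument is then routine.
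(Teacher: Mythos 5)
Your argument is correct and matches the paper's intent: the paper gives no separate proof of this corollary, treating it as an immediate consequence of the preceding proposition together with the listed properties of strongly convex domains ($f^{\prime}\bullet h\equiv C>0$ and $h=G^{\prime}\circ f$ up to a positive constant), which is exactly how you proceed. Your verification that the proposition's hypotheses hold at every boundary point, followed by the routine gluing, is the intended route.
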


And an immediate consequence of a result of Lempert we get the following known property (see \cite{Lem 1981}, \cite{Aba 1989}, compare also \cite{Kos-War 2013}).

\begin{corollary} Let $D$ be a strongly convex domain with real analytic boundary. Then the Lempert left inverses extend holomorphically through $\overline{D}$. Moreover, for $z\in\overline{D}$ $|G(z)|=1$ iff $z\in f(\partial\mathbb D)$.
\end{corollary}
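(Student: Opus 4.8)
The plan is to reduce the statement to the two previous corollaries, so the real content lies in verifying their hypotheses for a strongly convex domain with real analytic boundary.

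First I would recall the regularity theory of Lempert for strongly convex domains with real analytic boundary. The key input is that, in this setting, the complex geodesic $f$ and its dual $h$ extend holomorphically (not merely continuously or H\"older) across $\partial\mathbb D$ to a neighborhood of $\overline{\mathbb D}$; this is precisely the content of the results of Lempert cited in the excerpt (\cite{Lem 1981}, \cite{Aba 1989}). Granting this, the preceding Corollary (the one for strongly convex domains whose geodesic and dual extend holomorphically through the boundary) applies directly at every boundary point $f(\lambda_0)$ with $\lambda_0\in\partial\mathbb D$, yielding that the Lempert left inverse $G$ determined by $h$ extends holomorphically through each such point. Combined with the fact that $G$ is already holomorphic on $D$, this gives holomorphic extension of $G$ across all of $\overline{D}$.

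Next I would address the boundary behavior $|G(z)|=1$. The inclusion giving $|G(z)|=1$ for $z\in f(\partial\mathbb D)$ is immediate, since $G(f(\lambda))=\lambda$ extends continuously and $|\lambda|=1$ on $\partial\mathbb D$. For the converse I would argue that $G$ maps $\overline{D}$ into $\overline{\mathbb D}$ and that the only boundary points sent to $\partial\mathbb D$ are those on the geodesic's boundary trace. This should follow from the geometric description of the Lempert left inverse: the fiber $G^{-1}(\lambda)$ is the slice of $D$ by the affine hyperplane through $f(\lambda)$ with normal $h(\lambda)$, and for $|\lambda|=1$ the supporting hyperplane at the strongly convex boundary meets $\partial D$ in exactly the single point $f(\lambda)$. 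Strong convexity is what forces this intersection to be a single point rather than a higher-dimensional set, which pins down the equivalence.

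The main obstacle I anticipate is the converse direction of the boundary modulus statement, namely ruling out points $z\in\partial D\setminus f(\partial\mathbb D)$ with $|G(z)|=1$. Here one must genuinely use strong convexity: the supporting real hyperplane to $D$ at a boundary point touches $\partial D$ only at that point, so the complex hyperplane defining the fiber $G^{-1}(\lambda)$ for $|\lambda|=1$ can meet $\overline D$ only along the geodesic trace $f(\lambda)$. I would verify that the holomorphically extended $G$ cannot attain boundary modulus one off the geodesic by invoking that $G$ is an open map where its differential is nonzero together with the supporting hyperplane argument already used in the proof of Theorem~\ref{theorem:lempert-left-inverses}. The remaining steps (continuity of the extension, the trivial inclusion) are routine once the extension across $\overline{D}$ is established.
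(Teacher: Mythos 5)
Your route is the one the paper intends: the corollary is stated as an immediate consequence of Lempert's regularity theory (holomorphic extendability of $f$ and its dual $h$ across $\partial\mathbb D$ for real analytic boundary) fed into the preceding proposition and corollary, with strong convexity supplying the boundary modulus statement via the supporting hyperplane; the paper itself offers no written proof beyond the citation, and your reconstruction matches its setup.

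One point to tighten: the claim that holomorphy of $G$ on $D$ together with extension through the points of $f(\partial\mathbb D)$ already yields extension ``across all of $\overline D$'' skips the boundary points $z_0\in\partial D\setminus f(\partial\mathbb D)$. For those you must still invoke the proposition, now with a cluster value $\lambda_0\in G^*(z_0)$ lying in the open disc (where the hypothesis on $f$ and $h$ is trivially satisfied), and the fact that such $\lambda_0$ cannot lie on $\partial\mathbb D$ is exactly your strong convexity argument: if $\lambda_0\in\partial\mathbb D$ were a cluster value, then by continuity $(z_0-f(\lambda_0))\bullet h(\lambda_0)=0$, so $z_0$ would lie on the supporting complex hyperplane at $f(\lambda_0)$, which meets $\overline D$ only at $f(\lambda_0)$. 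So the second half of your proof is not merely the converse of the modulus statement; it is also the missing ingredient for the extension at off-geodesic boundary points, and the argument should be ordered accordingly.
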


\subsection{Obstruction to regular extensions of left inverses}

It is known that for very regular domains there exist left inverses that are not Lempert which shows that the uniqueness of left inverses to complex geodesics does not hold generally (for results on that topic see for instance \cite{Kos-Zwo 2016}). The simplest example in that context that also shows the noncontinuity of left inverses (being certainly non Lempert) in very regular situation is the function
\begin{equation}
    \mathbb B_2\owns z\to \frac{z_1}{\sqrt{1-z_2^2}}\in\mathbb D
\end{equation}
that is a left inverse to the complex geodesic $\mathbb D\owns \lambda\to (\lambda,0)\in\mathbb B_2$ and it does not extend continuously through the points $(0,\pm 1)$.

A more refined example of a (non-Lempert) left inverse in the unit ball $\mathbb B_2$ is the following example (see \cite{Kos-Zwo 2018}, compare also \cite{Kos-Zwo 2021})
\begin{equation}
G:\mathbb B_2\owns z\to
\frac{2z_1(1-z_1)-z_2^2}{2(1-z_1)-z_2^2}\in\mathbb D
\end{equation}
that is a left inverse (up to a post-composition with an automorphism of the unit disc) to a 'big' family of complex geodesics
\begin{equation}
    f_t(\lambda)=
    \left(\frac{t^2+\lambda}{1+t^2},\frac{t(\lambda-1)}{1+t^2}\right),\;\lambda\in\mathbb D,t\in\mathbb R.
\end{equation}
The function $G$ does not extend continuously through the only point $(1,0)$ and extends holomorphically through all other boundary points. It also assumes at the boundary of $\mathbb B_2$ the values from $\partial\mathbb D$ for all $z\in\partial\mathbb B_2\setminus\{(1,0)\}$ iff $\operatorname{Im}(z_2(1-\overline{z_1}))=0$, which shows that the behaviour of left inverses in very regular domain (the ball) differs significantly from the behaviour of the Lempert left inverse.


\section{Examples of domains admitting many Lempert left inverses}\label{section:examples}
As we saw in regular domains (strongly convex) the Lempert left inverses are uniquely determined (unlike the case of general left inverses). We present below the situation in a less regular domains and we concentrate on two important classical examples.

\subsection{The bidisc}
In the case of the bidisc the study of complex geodesics may be reduced (up to holomorphic automorphisms of the bidisc) to the two cases. We shall consider them below.

First consider the complex geodesic
\begin{equation}
    \mathbb D\owns\lambda\to(\lambda,\lambda\psi(\lambda)),\lambda\in\mathbb D,
\end{equation}
where $\psi:\mathbb D\to\mathbb D$ is holomorphic.

Then the (only) left inverse (and thus the only Lempert left inverse) is the projection on the first coordinate. 


On the other hand the complex geodesic
\begin{equation}
    f_d:\mathbb D\owns\lambda\to(\lambda,\lambda)\in\mathbb D^2
\end{equation}
has many left inverses. More precisely, any left inverse to $f_d$  hast the following form
(see \cite{Agl-McC 2002}, Example 11.79):
\begin{equation}\label{equation:left-inverse-bidisc}
    G(z)=\frac{tz_1+(1-t)z_2-z_1z_2h(z)}{1-((1-t)z_1+tz_2)h(z)}, 
\end{equation}
where $t\in[0,1]$ and $h:\mathbb D^2\to\overline{\mathbb D}$ is holomorphic.

There are also many Lempert left inverses to $f_d$. For instance, for $t\in[0,1]$ the function
$\mathbb D^2\owns z\to tz_1+(1-t)z_2\in\mathbb D$ is a Lempert left inverse to $f_d$.

Below we shall prove that these are all Lempert left inverses to $f_d$.

\begin{proposition}
    Any Lempert left inverse to the complex geodesic $f_d$ is of the following form: $G(z)=tz_1+(1-t)z_2$, $t\in[0,1]$, $z\in\mathbb D^2$.
\end{proposition}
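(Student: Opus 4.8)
The plan is to combine the explicit description \eqref{equation:left-inverse-bidisc} of \emph{all} left inverses of $f_d$ with the rigidity contained in Theorem~\ref{theorem:lempert-left-inverses}, namely that a Lempert left inverse is completely determined by its derivative along the geodesic. So I would start from an arbitrary Lempert left inverse $G$ to $f_d$. Since $G$ is in particular a left inverse, it has the form \eqref{equation:left-inverse-bidisc} for some $t\in[0,1]$ and some holomorphic $h:\mathbb D^2\to\overline{\mathbb D}$, and the whole point will be to show that the value of $G$ does not actually depend on $h$.

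The first step is to compute the gradient of $G$ along the geodesic, i.e. the vector $G^{\prime}(f_d(\lambda))=(\partial_1 G(\lambda,\lambda),\partial_2 G(\lambda,\lambda))$. Writing $G=N/D$ with $N=tz_1+(1-t)z_2-z_1z_2h$ and $D=1-((1-t)z_1+tz_2)h$, one checks that on the diagonal $N=\lambda(1-\lambda h)$ and $D=1-\lambda h$ (here $h=h(\lambda,\lambda)$), so that $D\neq0$ because $|\lambda h|<1$. A direct differentiation, in which the terms involving $\partial_1 h$ and $\partial_2 h$ cancel against each other, then gives $\partial_1 G(\lambda,\lambda)=t$, and by the relation $\partial_1 G(\lambda,\lambda)+\partial_2 G(\lambda,\lambda)=1$ also $\partial_2 G(\lambda,\lambda)=1-t$. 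The outcome I want from this step is that the derivative of $G$ along $f_d$ is the constant vector $(t,1-t)$, independently of $h$.

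Next I would invoke Theorem~\ref{theorem:lempert-left-inverses}. Let $H$ be the Lempert left inverse it associates to $G$; then $H^{\prime}(f_d(\lambda))=G^{\prime}(f_d(\lambda))=(t,1-t)$, and the fiber of $H$ over $\lambda$ is the affine hyperplane $f_d(\lambda)+\ker H^{\prime}(f_d(\lambda))$ intersected with $\mathbb D^2$, namely $\{z\in\mathbb D^2:t(z_1-\lambda)+(1-t)(z_2-\lambda)=0\}=\{z\in\mathbb D^2:tz_1+(1-t)z_2=\lambda\}$. As $H$ is constant equal to $\lambda$ on this fiber, necessarily $H(z)=tz_1+(1-t)z_2$, a map which indeed sends $\mathbb D^2$ into $\mathbb D$ since $t\in[0,1]$. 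Finally, because $G$ is itself a Lempert left inverse, the last assertion of Theorem~\ref{theorem:lempert-left-inverses} gives $G\equiv H$, so $G(z)=tz_1+(1-t)z_2$ with $t\in[0,1]$, as claimed.

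I expect the only real (though entirely routine) obstacle to be the cancellation in the derivative computation: one must verify carefully that the contributions of $\partial_1 h$ and $\partial_2 h$ vanish on the diagonal, so that $G^{\prime}(f_d(\lambda))$ reduces to the constant $(t,1-t)$ and the non-Lempert freedom encoded by $h$ becomes invisible to the derivative. Once this is in place, the rigidity furnished by Theorem~\ref{theorem:lempert-left-inverses} does the rest, forcing a Lempert left inverse to coincide with the planar-fibered map prescribed by its derivative.
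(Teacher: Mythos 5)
Your proof is correct, but it takes a genuinely different route from the paper's. Both arguments start from the classification \eqref{equation:left-inverse-bidisc}, but the paper never differentiates $G$: it writes out the condition that the fiber $G^{-1}(\alpha)$ contain a line segment $\alpha(1,1)+\lambda v$ through $f_d(\alpha)$, clears denominators to get $(\alpha(tv_1+(1-t)v_2)+\lambda v_1v_2)\,h(\alpha(1,1)+\lambda v)=tv_1+(1-t)v_2$, rules out $h(\alpha(1,1))=1/\alpha$ by $|h|\le 1$, deduces $tv_1+(1-t)v_2=0$ and hence $h=0$ along every fiber, and concludes $h\equiv 0$ by the identity principle. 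You instead compute $G^{\prime}(f_d(\lambda))=(t,1-t)$ (the cancellation of the $\partial_1 h$ term between numerator and denominator is exactly as you predict, yielding $\partial_1 N\cdot D-N\,\partial_1 D=t(1-\lambda h)^2$, and $D=1-\lambda h\neq 0$ since $|h|\le 1$) and then invoke the rigidity of Theorem~\ref{theorem:lempert-left-inverses}: a Lempert left inverse is determined by its derivative along the geodesic, and the one with derivative $(t,1-t)$ has fibers $\{tz_1+(1-t)z_2=\lambda\}$. Your route is arguably more in the spirit of the paper's general machinery and makes transparent why the parameter $h$ is invisible; the paper's route is self-contained (it needs only the definition of a Lempert left inverse, not Theorem~\ref{theorem:lempert-left-inverses}) and yields the slightly stronger conclusion that $h\equiv 0$ in the representation \eqref{equation:left-inverse-bidisc}. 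One cosmetic remark: in computing $\partial_1 G$ only $\partial_1 h$ occurs, so the cancellation is between the $\partial_1 h$ contributions of $N$ and $D$ rather than between $\partial_1 h$ and $\partial_2 h$; this does not affect the argument.
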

\begin{proof} Let $G$ be a Lempert inverse. It must be as in (\ref{equation:left-inverse-bidisc}).
    In the case $t=0$ or $t=1$ we get that $G$ is one of the projections.

Assume then that $t\in(0,1)$ and let $G$ be a Lempert left inverse. Then $G$ must be as above such that for any $\alpha\in\mathbb D$ we get $v\in\mathbb C^2\setminus\{(0,0)\}$ such that
\begin{equation}
    G(\alpha(1,1)+\lambda v)=\alpha
\end{equation}
for all $\lambda$ with $|\alpha+\lambda v_1|,|\alpha+\lambda v_2|<1$. Transforming the last formula we get that
\begin{equation}
    \lambda(t v_1+(1-t)v_2)=h(\alpha(1,1)+\lambda v)\lambda(\alpha(tv_1+(1-t)v_2)+\lambda v_1v_2).
\end{equation}
This gives for $\lambda \neq 0$
\begin{equation}
(\alpha(tv_1+(1-t)v_2)+\lambda v_1v_2)h(\alpha(1,1)+\lambda v)=tv_1+(1-t)v_2.
\end{equation}
Consequently, either $h(\alpha(1,1))=1/\alpha$, which give a contradiction or $tv_1+(1-t)v_2=0$. In the latter case we may choose $v=(1-t,-t)$ and then $h(\alpha(1,1)+\lambda(1-t,-t))=0$ for all $\alpha\in\mathbb D$ and sufficiently small $\lambda\in\mathbb D$. Then by the identity principle $h\equiv 0$ which finishes the proof.
\end{proof}
\begin{remark}
    Note that unlike in the case of the unit ball the holomorphic automorphsims of the bidisc do not transform affine complex lines to affine complex lines so the fact that we gave a complete description of Lempert left inverses for $f_d$ does not mean that we get a complete description of Lempert left inverses of complex geodesics of the form
    \begin{equation}
        \mathbb D\owns\lambda\to(\lambda,a(\lambda))\in\mathbb D,
    \end{equation}
    where $a$ is an automorphism of $\mathbb D$.
\end{remark}

\subsection{The symmetrized bidisc}
The symmetrized bidisc $\mathbb G_2$ defined as
\begin{equation}
    \mathbb G_2:=\{(\lambda_1+\lambda_2,\lambda_1\lambda_2):\lambda_1,\lambda_2\in\mathbb D\}
\end{equation}
is a Lempert domain that is not biholomorphic to a convex domain or even cannot be exhausted by domains that are biholomorphic to convex domains (see \cite{Agl-You 2004}, \cite{Cos 2004}, \cite{Edi 2004}). However, it is an increasing union of strongly linearly convex domains (see \cite{Pfl-Zwo 2012}) which consequently by the standard properties of invariant functions implies directly from the papers of Lempert that it is a Lempert domain and even more, it is from the class $\mathcal L_0$.


There is a complete description of all complex geodesics in the symmetrized bidisc. They are uniquely determined and extend holomorphically through the boundary of $\mathbb D$ which differs from the case of the bidisc (see \cite{Agl-You 2006}, \cite{Pfl-Zwo 2005}).

Unlike in the case of the bidisc any holomorphic automorphism of the symmetrized bidisc maps intersections of $\mathbb G_2$ with affine lines to intersections of $\mathbb G_2$ with affine lines. 

We know a family of Lempert left inverses from which we may find left inverses to all complex geodesics in the symmetrized bidisc. Define
\begin{equation}
  \Psi_{\omega}(s,p):=\frac{2p-\omega s}{2-\overline{\omega} s}, \; (s,p)\in\mathbb G_2, \; |\omega|\leq 1.
\end{equation}
The family $\{\Psi_{\omega}:|\omega|=1\}$ may replace the family of all bounded holomorphic functions in the definition of the Carath\'eodory distance and thus for any complex geodesic in the symmetrized bidisc there is a left inverse of the form $\Psi_{\omega}$, $|\omega|=1$ (up to a composition with an automorphism of the unit disc).

Note that $\Psi_{\omega}$, $|\omega|=1$, does not extend continuously through the point $(2\omega,\omega^2)$. On the other hand $\Psi_{\omega}$, $\omega\in\mathbb D$, extends holomorphically through all other boundary points of $\mathbb G_2$.

In \cite{Kos-Zwo 2016} the authors found a complete characterization of uniqueness of left inverse to complex geodesics. In most cases the complex geodesics admit only one left inverse (that is then the Lempert left inverse). The cases of complex geodesics with non-uniqueness comprise the royal geodesic and flat geodesics that we discuss below. And a very special case of complex geodesics omitting the royal variety that we shall not discuss.

\subsection{Lempert left inverse to the royal geodesic}
We start with the {\it royal geodesic} that is the geodesic of the form
\begin{equation}
    f_r:\mathbb D\owns\lambda\to (2\lambda,\lambda^2)\in\mathbb G_2.
\end{equation}

We have a complete description of all left inverses to $f_r$.
\begin{proposition}\label{proposition:left-inverses-royal-geodesics}
    The mapping $\Psi:\mathbb G_2\to\mathbb D$ is the left inverse to the complex geodesic $f_r$ iff 
    \begin{equation}
      \Psi(s,p)=\frac{s-2p h(s,p)}{2-s h(s,p)},\; (s,p)\in\mathbb G_2,
\end{equation}
where $h\in\mathcal O(\mathbb G_2,\overline{\mathbb D})$.  
    \end{proposition}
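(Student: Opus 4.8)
The plan is to characterize all left inverses to the royal geodesic $f_r(\lambda)=(2\lambda,\lambda^2)$ by exploiting the special structure of the symmetrized bidisc and the known family $\Psi_\omega$. First I would verify the ``if'' direction: given $h\in\mathcal O(\mathbb G_2,\overline{\mathbb D})$, one checks directly that the candidate $\Psi(s,p)=\tfrac{s-2ph(s,p)}{2-sh(s,p)}$ is well-defined (the denominator $2-sh(s,p)$ does not vanish on $\mathbb G_2$, since $|s|<2$ there and $|h|\le 1$), maps into $\overline{\mathbb D}$, and satisfies $\Psi(f_r(\lambda))=\lambda$. Substituting $(s,p)=(2\lambda,\lambda^2)$ gives numerator $2\lambda-2\lambda^2h(f_r(\lambda))$ and denominator $2-2\lambda h(f_r(\lambda))$, whose quotient is $\lambda$ regardless of $h$; this is the routine computation I would only sketch. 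One must also confirm $\Psi$ actually maps into $\mathbb D$ (open disc) rather than merely $\overline{\mathbb D}$, which follows because $\Psi$ is a nonconstant holomorphic function on the domain $\mathbb G_2$ and hence, by the open mapping principle together with $|\Psi|\le 1$, has image in $\mathbb D$.

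For the ``only if'' direction, suppose $\Psi:\mathbb G_2\to\mathbb D$ is an arbitrary left inverse to $f_r$. The key idea is to solve the defining relation for $h$. Setting
\begin{equation}
    h(s,p):=\frac{s-2\Psi(s,p)}{2p-s\Psi(s,p)},
\end{equation}
which is the formal inversion of the Möbius-type expression for $\Psi$ in terms of $h$, I would show that this $h$ is the sought-after function. The main tasks are then: (i) to prove that $h$ so defined is holomorphic on $\mathbb G_2$, i.e.\ that the denominator $2p-s\Psi(s,p)$ does not vanish, and (ii) to prove $|h|\le 1$ on $\mathbb G_2$.

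For holomorphy, the potential difficulty is the vanishing of $2p-s\Psi(s,p)$. Here I would use the geometry of $\mathbb G_2$: the royal variety is $\{s^2=4p\}$, and on it $\Psi$ restricts to the inverse of $f_r$. A direct check shows that along the royal variety the numerator $s-2\Psi$ and denominator $2p-s\Psi$ vanish simultaneously (both vanish precisely at the royal geodesic image), so the apparent singularities are removable; away from the royal variety one must rule out spurious zeros of the denominator. The cleanest route is to observe that for $|\omega|=1$ the functions $\Psi_\omega$ are left inverses corresponding to constant $h\equiv\omega$, and that the general formula is obtained by allowing $h$ to vary holomorphically — so I would try to reduce the general case to a Schwarz-Pick / interpolation argument on the fibers. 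Concretely, for fixed $(s,p)$ the relation expresses a Möbius relationship between $\Psi(s,p)$ and the value $h(s,p)$; inverting it defines $h(s,p)$, and the estimate $|h|\le 1$ should follow from $|\Psi|\le 1$ together with the fact that $\Psi$ is a left inverse, via the Schwarz lemma applied to the disc-automorphism structure of the formula.

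The step I expect to be the main obstacle is establishing $|h|\le 1$ (equivalently $h\in\mathcal O(\mathbb G_2,\overline{\mathbb D})$) in full generality, because a priori $h$ is only defined as a quotient and its boundedness is not automatic. I anticipate that the resolution uses the Lempert-theoretic characterization of left inverses in $\mathbb G_2$: since every left inverse arises (up to post-composition with a disc automorphism) from the Carath\'eodory-extremal family $\{\Psi_\omega:|\omega|=1\}$, and since the description of all left inverses should be governed by the Schur-class interpolation data on the royal fiber, one deduces that the associated $h$ must lie in the Schur class $\mathcal O(\mathbb G_2,\overline{\mathbb D})$. I would make this precise by rewriting the left-inverse condition as a statement that the map $(s,p)\mapsto h(s,p)$ intertwines two families of disc automorphisms and invoking the maximum principle on $\mathbb G_2$ to propagate the boundary bound $|h|\le 1$ into the interior.
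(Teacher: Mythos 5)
The ``if'' direction of your argument is fine and matches what is needed. The problem is the ``only if'' direction, which is the substance of the proposition, and there your proposal has a genuine gap. You define $h$ by formally inverting the M\"obius-type relation, $h=(s-2\Psi)/(2p-s\Psi)$, and then you yourself identify the two facts that must be proved --- that the denominator $2p-s\Psi(s,p)$ has no zeros off the royal variety (and that the common zeros along the royal variety, where both numerator and denominator vanish identically, are removable), and that $|h|\le 1$ --- but you do not actually prove either. The suggested resolutions (``a Schwarz--Pick / interpolation argument on the fibers'', ``invoking the maximum principle on $\mathbb G_2$ to propagate the boundary bound'') are not arguments: there is no a priori boundary bound on $h$ to propagate, and nothing in the left-inverse condition alone rules out a spurious zero of $2p-s\Psi$ at a point where $s-2\Psi\neq 0$, which would make $h$ unbounded. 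So the construction of $h$ is exactly the part that remains open in your write-up.

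The paper closes this gap by a different and essentially one-line idea that your proposal is missing: lift to the bidisc. If $\pi(\lambda_1,\lambda_2)=(\lambda_1+\lambda_2,\lambda_1\lambda_2)$ is the symmetrization map, then for any left inverse $\Psi$ to $f_r$ the composition $\Phi:=\Psi\circ\pi$ is a (symmetric) left inverse to the diagonal geodesic $f_d(\lambda)=(\lambda,\lambda)$ in $\mathbb D^2$, and all such left inverses are already classified (Agler--McCarthy, Example 11.79; equation (\ref{equation:left-inverse-bidisc}) in the paper) in the form
\begin{equation*}
\Phi(z)=\frac{tz_1+(1-t)z_2-z_1z_2h(z)}{1-((1-t)z_1+tz_2)h(z)},\qquad t\in[0,1],\ h\in\mathcal O(\mathbb D^2,\overline{\mathbb D}).
\end{equation*}
The symmetry of $\Phi$ forces $\partial_{z_1}\Phi(0,0)=\partial_{z_2}\Phi(0,0)$, i.e.\ $t=1-t$, so $t=1/2$, and the formula then descends through $\pi$ to the stated expression on $\mathbb G_2$. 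In this route the existence, holomorphy and bound $|h|\le 1$ come for free from the bidisc realization theorem, which is precisely the input your direct inversion lacks. To repair your proof you would either need to supply that realization result yourself or adopt the lift-to-the-bidisc reduction.
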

\begin{proof}
    It follows from the description of left inverses to complex geodesics in the bidisc that the arbitrary left inverse $\Psi$ to $f_r$  is such that $\Phi:=\Psi\circ \pi$ , is the left inverse to the complex geodesic $f_d$ in $\mathbb D^2$ so it is of the form as in (\ref{equation:left-inverse-bidisc}) with the additional symmetry condition. It means, in particular, that
\begin{equation}
    t=\frac{\partial\Phi(0,0)}{\partial z_1}=\frac{\partial \Phi(0,0)}{\partial z_2}=1-t,
\end{equation}
which means in particular that $t=1/2$ and easily finishes the proof.
\end{proof}

 Note that the functions $\Psi_{\omega}$, $|\omega|\leq 1$ being the left inverses (up to a composition with the disc automorphisms) of complex geodesics of the symmetrized bidisc are Lempert left inverses. 

Below we present an implicit way of the application of Theorem~\ref{theorem:lempert-left-inverses} and provide more examples of Lempert left inverse to the royal geodesic. Note that we cannot apply this theorem as the symmetrized bidisc is not convex (nor even biholomorphic to a convex domain) but we show how the construction presented in Theorem~\ref{theorem:lempert-left-inverses} leads to the formulas for new (in our opinion unexpected) Lempert left inverses to $f_r$. But as
 we cannot apply Theorem~\ref{theorem:lempert-left-inverses} we have to, after establishing the possible formula for the Lempert left inverse, verify that the function given by that formula is in fact the left inverse for $f_r$.

Note that the functions $-\overline{\omega}\Psi_{\omega}$, $|\omega|=1$, are left inverses to $f_r$ and
\begin{equation}
    (-\overline{\omega}\Psi_{\omega})^{\prime}(s,p)=\frac{2}{(2-\overline{\omega}s)^2}\left(\overline{\omega}(\omega-p\overline{\omega}), \overline{\omega}^2s-2\overline{\omega}\right),\; (s,p)\in\mathbb G_2.
\end{equation}
The equation in Theorem~\ref{theorem:lempert-left-inverses} describing the Lempert left inverse would be
\begin{equation}
    \left((2\lambda,\lambda^2)-(s,p)\right)\bullet \frac{2}{(2-\overline{\omega}2\lambda)^2}\left(1-\overline{\omega}^2\lambda^2,-\overline{\omega}(2-\overline{\omega}2\lambda)\right)=0.
\end{equation}
Now  the way the intermediate Lempert left inverses are produced in Theorem~\ref{theorem:family-lempert-left-inverses} gives us (in the case the left iverses related with $\omega_2=-\omega_1=-1$ and $t=1/2$ are considered) the following relation
\begin{equation}
    -s\lambda^2+2\lambda(1+p)-s=0.
\end{equation}
Consequently, our candidate for the Lempert left inverse would be the function
\begin{equation}
    \Phi(s,p):=\frac{s}{1+p+\sqrt{(1+p)^2-s^2}},\;(s,p)\in\mathbb G_2,
\end{equation}
where the square root in the definition of $\Phi$ is chosen so that $\sqrt{1}=1$. At this place note that for $(s,p)=(\lambda_1+\lambda_2,\lambda_1\lambda_2)\in\mathbb G_2$ we have $(1-p)^2+s^2=(1-\lambda_1^2)(1-\lambda_2^2)$ so the square root in the definition of $\Phi$ (and thus the function $\Phi$) is a well-defined holomorphic function.


What remains to prove is to show that $\left|\Phi(s,p)\right|<1$, $(s,p)\in\mathbb G_2$.

Note that $\Phi(s,p)=\frac{s/(1+p)}{1+\sqrt{1-\left(s/(1+p)\right)^2}}$.
In other words we have the composition of two functions $\Phi=\alpha\circ\beta$,
$\beta(s,p):=\frac{s}{1+p}$, $\alpha(\lambda):=\frac{\lambda}{1+\sqrt{1-\lambda^2}}$. The calculations carried out on p. 153 in \cite{Nik-Pfl-Zwo 2008} give the following
\begin{equation}
    \beta(\mathbb G_2)=\bigcup_{\lambda\in\mathbb D}\triangle\left(\frac{2i\operatorname{Im}\lambda}{1-|\lambda|^2},\frac{|1-\lambda^2|}{1-|\lambda|^2}\right).
\end{equation}
As $\left|\pm 1+\frac{2i\operatorname{Im}\lambda}{1-|\lambda|^2}\right|=\frac{|1-\lambda^2|}{1-|\lambda|^2}$ we get that $\beta(\mathbb G_2)=\mathbb C\setminus((-\infty,-1]\cup [1,\infty))=:A$. On the other hand one may verify that $\alpha(A)=\mathbb D$ which completes the proof of our claim.

\begin{remark} It would be desirable to try to find all the Lempert inverses to the royal geodesic. As we saw in Proposition~\ref{proposition:left-inverses-royal-geodesics} the full description of left inverses to the royal geodesic is known. But it seems that a nice description of all the Lempert left inverses could be more challenging.
\end{remark}

\subsection{Analysis of the case of flat geodesics} Another class of complex geodesics admitting many (Lempert) left inverses are so called {\it flat geodesics}. They are of the form
\begin{equation}
    \mathbb D\owns\lambda\to(\beta+\overline{\beta}\lambda,\lambda)\in\mathbb G_2,
\end{equation}
where $|\beta|<1$. As all flat geodesics are biholomorphically equivalent we may assume that $\beta=0$. All the functions $\Psi_{\omega}$, $|\omega|\leq 1$ are its Lempert left inverses (up to rotations). Let us repeat the reasoning as in the case of the royal geodesic. The formula for the left inverse in Theorem~\ref{theorem:lempert-left-inverses} would lead to the following equation
\begin{equation}
    ((0,\lambda)-(s,p))\bullet\left(\lambda\overline{\omega}-\omega,2\right)=0.
\end{equation}
The reasoning as in the case of the royal geodesic that imitates the procedure introduced in Theorem~\ref{theorem:family-lempert-left-inverses} with $\omega_1,\omega_2\in\overline{\mathbb D}$, $t\in[0,1]$ leads to the solution of $\lambda=\Psi_{t\omega_1+(1-t)\omega_2}(s,p)$. And thus in this case the procedure delivers the known Lempert left inverses.


\begin{remark}
    The above considerations leave a lot of problems open; some of them have already been mentioned. As to other problems one may ask the question whether the family of all Lempert left inverses to a given complex geodesic in the symmetrized bidisc is always connected. Can one have a counterpart of Theorem~\ref{theorem:lempert-left-inverses} in a wider class of domains or applied to special complex geodesics, perhaps for the symmetrized bidisc? Can one find all (Lempert) left inverses to flat geodesics or a royal geodesic?
\end{remark}

\end{document}